\documentclass[11pt]{amsart}
\usepackage{enumerate,url,amssymb, mathrsfs}
\usepackage[thinlines]{easybmat}
\newtheorem{theorem}{Theorem}[section]
\newtheorem{lemma}[theorem]{Lemma}
\newtheorem{proposition}[theorem]{Proposition}
\newtheorem*{MRes}{Main Result}

\theoremstyle{definition}
\newtheorem{definition}[theorem]{Definition}

\newtheorem{corollary}[theorem]{Corollary}

\theoremstyle{remark}

\newtheorem{question}[theorem]{Question}
\newtheorem*{EP}{Extension Problem}

\numberwithin{equation}{section}

\newcommand{\abs}[1]{\lvert#1\rvert}
\newcommand{\norm}[1]{\lVert#1\rVert}
\newcommand{\inn}[1]{\langle#1\rangle}
\newcommand{\B}{\mathbb{B}}
\newcommand{\R}{\mathbb{R}}
\newcommand{\QC}{\mathcal{QC}}
\newcommand{\BL}{\mathcal{BL}}
\newcommand{\DM}{ \mathcal{DM}}

\DeclareMathOperator{\diam}{diam}

\DeclareMathOperator{\loc}{loc}

\def\XXint#1#2#3{{\setbox0=\hbox{$#1{#2#3}{\int}$}
\vcenter{\hbox{$#2#3$}}\kern-.5\wd0}}

\begin{document}

\title[An $N$-dimensional version of  the Beurling-Ahlfors extension]{An $N$-dimensional version of  \\the Beurling-Ahlfors extension}

\author{Leonid V. Kovalev}
\address{Department of Mathematics\\ Syracuse University\\ Syracuse,
NY 13244, USA}
\email{lvkovale@syr.edu}
\thanks{Kovalev was supported by the NSF grant DMS-0913474.}

\author{Jani Onninen}
\address{Department of Mathematics\\ Syracuse University \\ Syracuse,
NY 13244, USA}
\email{jkonnine@syr.edu}
\thanks{Onninen was supported by the NSF grant  DMS-0701059.}

\subjclass[2000]{Primary 30C65; Secondary 47H05, 47B34}

\date{April 17, 2008}


\begin{abstract}
We extend monotone quasiconformal mappings from dimension $n$ to \mbox{$n+1$} while  preserving both monotonicity and quasiconformality. The extension is given explicitly by an integral operator. In the case $n=1$ it yields a refinement of the Beurling-Ahlfors extension.
\end{abstract}

\maketitle

\section{Introduction}

\begin{EP}
Given a mapping $f \colon \R^n \to \R^n$ of class $\mathscr A$, find  $F \colon \R^{n+1} \to \R^{n+1}$ of class $\mathscr A$ such that the restriction of $F$ to $\R^n$ agrees with $f$.
\end{EP} 

Let us introduce  coordinate notation $x=(x^1, \dots ,x^n)$ and $f=(f^1, \dots , f^n)$. By setting $F^i=f^i$ for $i=1, \dots , n$ and $F^{n+1}=x^{n+1}$ one immediately obtains a solution to the extension problem for many classes $\mathscr A$ such as continuous ($\mathscr A=C^0$), smooth ($\mathscr A=C^{k}$), homeomorphic, diffeomorphic, and (bi-)Lipschitz mappings.

When $\mathscr A=\QC$, the class of quasiconformal mappings, the extension problem is much more difficult. It was solved 
\begin{itemize}
\item for $n=1$ by Beurling and Ahlfors \cite{BA} in 1956,
\item  for $n=2$ by Ahlfors \cite{Ah} in 1964,
\item for $n\le 3$ by Carleson \cite{Ca} in 1974, and
\item for all $n \ge 1$ by Tukia and V\"ais\"al\"a \cite{TV} in 1982.
\end{itemize}

The Tukia-V\"ais\"al\"a extension uses, among other things, Sullivan's   theory~\cite{Su} of deformations of Lipschitz embeddings. Our goal is to give an explicit extension for a subclass of $\QC$. Quasiconformal mappings can be defined as orientation-preserving quasisymmetric  mappings \cite{Heb, Vab}.

\begin{definition}\label{qs}
A homeomorphism $f \colon \R^n \to \R^n$ is quasisymmetric if there is a homeomorphism $\eta \colon [0,\infty ) \to [0, \infty )$ such that
\begin{equation}\label{Kav24}
 \frac{|f(x)-f(z)|}{|f(y)-f(z)|} \le \eta \left(\frac{|x-z|}{|y-z|}\right) .
\end{equation}
for $x,y, z \in \R^n$, $z \ne y$.
\end{definition}

One can say  that quasisymmetry is a three-point condition. But there are two   subclasses of  $\QC$ that are defined by \emph{two-point} conditions, namely bi-Lipschitz class  $\BL$ and the class of
nonconstant delta-monotone mappings~\cite[Chapter 3]{AIMb}.  Recall that a mapping $f\colon \R^n \to \R^n$ is  {\it monotone} if
\begin{equation}\label{mon}
\inn{f(x)-f(y), x-y } \ge  0 \qquad \textnormal{ for all } x, y\in \R^n.
\end{equation}
We called  $f$ {\it delta-monotone} if there exists  $\delta >0$ such that
 \begin{equation}\label{demon}
\inn{f(x)-f(y), x-y } \ge   \delta\abs{f(x)-f(y)} \abs{x-y}  \qquad \textnormal{ for all } x, y\in \R^n.
\end{equation}
The class of nonconstant delta-monotone mappings is denoted by  $\DM$. 
When we want to specify  the value of $\delta$ we write that $f$ is $\delta$-monotone.

In contrast to the bi-Lipschitz case, the
extension problem for the class $\DM$ cannot be solved by means of the
trivial extension. For example, the mapping $f(x)=\abs{x}^p x$,
$p>-1$, belongs to $\DM$ but its trivial extension does not (unless
$p=0$).

\begin{MRes} Let $n \ge 2$. For any mapping $f \colon \R^n \to \R^n$ of class $\DM$ there exists  $F \colon \R^{n+1} \to \R^{n+1}$ of class $\DM$ such that the restriction of $F$ to $\R^n$ agrees with $f$.
\end{MRes}

Our proof is by an explicit construction that can be viewed as an $n$-dimensional version of the Beurling-Ahlfors extension. Suppose $f\in \DM$. Let $\R^{n+1}_+=\R^n\times[0,\infty)$ and 
 \begin{equation}\label{gauss}
 \phi(x) =(2 \pi)^{-\frac{n}{2}} e^{-\abs{x}^2/2}, \qquad x\in \R^n.
 \end{equation} 
We define $F \colon \R^{n+1}_+ \to  \R^{n+1}_+ $ by
\begin{eqnarray}
F^i(x,t)&= &\int_{\R^n} f^i(x+ty)\,\phi (y)\, d y  \qquad i=1, \dots , n \label{Fi}\\
F^{n+1}(x,t)&= &\int_{\R^n}  \inn{f(x+ty),y}\, \phi(y)\, dy \label{Fn1} 
 \end{eqnarray}
where $x\in\R^n$, $t\ge 0$ (see \S\ref{pfth} for the convergence of these integrals). 
Observe that $F(x,0)=(f(x),0)$. Furthermore, $F^{n+1}(x,t) \ge 0$ because 
\[\int_{\R^n}  \inn{f(x+ty),y}\, \phi(y)\, dy =  \int_{\R^n}  \inn{f(x+ty) -f(x),y}  \, \phi(y)\, dy \ge 0  \]
due to the monotonicity of $f$. Finally, we extend $F$ to $\R^{n+1}$ by reflection
\[F^i (x,t) = F^i(x,-t) \quad i=1,\dots , n \quad \mbox{ and } \quad  F^{n+1}(x,t)= - F^{n+1}(x,-t).\]

\begin{theorem}\label{Thm}
Let $n \ge 2$.  If $f \colon \R^n \to \R^n$  is  $\delta$-monotone, then  $F \colon \R^{n+1} \to  \R^{n+1} $ is   $\delta_1$-monotone   where $\delta_1$ depends only on $\delta$ and $n$. In addition, $F \colon \mathbb H^{n+1} \to \mathbb H^{n+1}$ is bi-Lipschitz in the hyperbolic metric.
\end{theorem}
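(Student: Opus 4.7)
The plan is to prove the theorem in three stages: regularity of $F$ and reduction to smooth $f$; a pointwise infinitesimal $\delta_1$-monotonicity inequality for the Jacobian $DF(x,t)$, which integrates to global $\delta_1$-monotonicity of $F$ on $\R^{n+1}_+$; and extension by reflection together with the hyperbolic bi-Lipschitz conclusion.

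Since $f\in\DM$ with $n\ge 2$ is $K(\delta)$-quasiconformal and hence of polynomial growth, the integrals defining $F^i$ and $F^{n+1}$ converge absolutely and may be differentiated under the integral sign, giving $F\in C^\infty(\R^{n+1}_+)$. As mollifying $f$ with a smooth, nonnegative, rotationally symmetric approximate identity preserves $\delta$-monotonicity, one may assume $f$ is smooth. Writing $V=(\xi,\tau)\in\R^n\times\R$ and setting $A(y)=Df(x+ty)$, $Z(y)=\xi+\tau y$, differentiation under the integral sign yields
\begin{equation*}
DF(x,t)V \;=\; \Bigl(\int_{\R^n} A(y)Z(y)\,\phi(y)\,dy,\ \int_{\R^n}\langle A(y)Z(y),y\rangle\,\phi(y)\,dy\Bigr),
\end{equation*}
together with the clean identity
\begin{equation*}
\langle DF(x,t)V,V\rangle \;=\; \int_{\R^n}\langle A(y)Z(y),Z(y)\rangle\,\phi(y)\,dy \;\ge\; \delta\int_{\R^n}|A(y)Z(y)|\,|Z(y)|\,\phi(y)\,dy,
\end{equation*}
where the last inequality is the pointwise $\delta$-monotonicity of $f$.

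The heart of the proof is the reverse estimate
\begin{equation*}
\int_{\R^n}|A(y)Z(y)|\,|Z(y)|\,\phi(y)\,dy \;\ge\; c(\delta,n)\,|DF(x,t)V|\,|V|,
\end{equation*}
which combined with the previous inequality yields the infinitesimal bound $\langle DF(x,t)V,V\rangle \ge \delta_1|DF(x,t)V|\,|V|$ with $\delta_1=\delta\,c(\delta,n)$. I would prove this using three ingredients: the Jensen-type upper bound $|DF(x,t)V|\le\int|A(y)Z(y)|\sqrt{1+|y|^2}\,\phi(y)\,dy$; the identity $\int|Z(y)|^2\phi(y)\,dy=|\xi|^2+n|\tau|^2$, which is two-sidedly equivalent to $|V|^2$; and the $K(\delta,n)$-quasiconformality of $f$, which controls the pointwise variation of $\|A(y)\|$ over the relevant Gaussian mass and supplies the comparison $|A(y)Z(y)|\asymp\|A(y)\|\,|Z(y)|$ needed to factor the integrals. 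Global $\delta_1$-monotonicity on $\R^{n+1}_+$ then follows by integrating $\langle DF(X_s)V,V\rangle$ along the segment $X_s=X_2+s(X_1-X_2)$ and using $|F(X_1)-F(X_2)|\le\int_0^1|DF(X_s)(X_1-X_2)|\,ds$.

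The reflected extension inherits $\delta_1$-monotonicity: for points in the lower half-space one has $D\tilde F(x,t)=J\,DF(x,-t)\,J$ where $J=\mathrm{diag}(1,\ldots,1,-1)$ is the reflection across $\{t=0\}$, so the pointwise inequality is preserved under conjugation by this isometry. For pairs of points on opposite sides of $\{t=0\}$, the continuity of $F$ across the boundary allows one to integrate the infinitesimal inequality along the straight segment (which crosses the boundary at a single point, of measure zero) and obtain the required bound. For the hyperbolic bi-Lipschitz claim, $\delta_1$-monotonicity gives $K'(\delta,n)$-quasiconformality, so the singular values of $DF$ are comparable; it then suffices to show $F^{n+1}(x,t)/t$ is two-sidedly bounded by constants depending only on $\delta$ and $n$. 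The lower bound comes from $F^{n+1}(x,t)\ge\delta\int|f(x+ty)-f(x)|\,|y|\phi(y)\,dy$ together with the quasiconformal lower bound on $|f(x+ty)-f(x)|$; the upper bound is obtained by applying the pointwise monotonicity estimate to $V=e_{n+1}$. The main obstacle throughout is the reverse integral inequality above: it requires a careful combination of $\delta$-monotonicity (for the quadratic form lower bound) and quasiconformality (for pointwise control of $A(y)$), and is the step where the hypothesis $n\ge 2$ becomes essential.
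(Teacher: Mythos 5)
Your overall strategy mirrors the paper's: a pointwise (infinitesimal) $\delta_1$-monotonicity inequality for $DF$, integration along segments, reflection to the full space, and the comparison $\norm{DF(x,t)}\approx F^{n+1}(x,t)/t$ for the hyperbolic bi-Lipschitz claim. However, the step you yourself flag as ``the heart of the proof'' --- the reverse inequality $\int_{\R^n}\abs{A(y)Z(y)}\,\abs{Z(y)}\,\phi(y)\,dy \ge c\,\abs{DF(x,t)V}\,\abs{V}$ --- is asserted rather than proved, and the mechanism you sketch does not close the gap. The comparison $\abs{A(y)Z(y)}\asymp\norm{A(y)}\,\abs{Z(y)}$ (which follows from part~\eqref{delta3} of Lemma~\ref{deltas}, not from quasiconformality as such) does \emph{not} ``factor the integrals'': $\norm{A(y)}$ and $\abs{Z(y)}$ both depend on $y$, so one cannot pass from $\int\norm{A(y)}\,\abs{Z(y)}^2\phi\,dy$ to $\bigl(\int\norm{A(y)}\phi\,dy\bigr)\bigl(\int\abs{Z(y)}^2\phi\,dy\bigr)$, and the unweighted identity $\int\abs{Z}^2\phi\,dy=\abs{\xi}^2+n\tau^2$ is of no direct use. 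What is actually needed, and what the paper supplies, is that $\mu:=\norm{Df}\,dx$ is a \emph{doubling} measure (Lemma~\ref{Dfdoub}) and the uniform two-sided bound $\int_\Omega\abs{y}^p\phi\,d\mu\asymp\mu(\B)$ for half-spaces $\Omega$ through the origin (Lemma~\ref{doublem}); moreover, the crucial trick is to restrict the lower bound to the half-space $\Omega=\{y:\inn{\xi,\tau y}\ge 0\}$, on which $\abs{\xi+\tau y}^2\ge\abs{\xi}^2+\tau^2\abs{y}^2$, so the two resulting terms can each be handled by Lemma~\ref{doublem}. Your phrase ``controls the pointwise variation of $\norm{A(y)}$'' names neither the doubling mechanism nor the half-space restriction; without them the bound can a priori fail, since $\norm{A(y)}\phi(y)$ could concentrate near $y=-\xi/\tau$, exactly where $\abs{Z(y)}$ vanishes.

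Two secondary issues. Your claim that for hyperbolic bi-Lipschitzness ``it suffices to show $F^{n+1}(x,t)/t$ is two-sidedly bounded by constants depending only on $\delta$ and $n$'' is wrong as stated --- that ratio depends on $f$ and need not be bounded; what must be shown is $\norm{DF(x,t)}\approx F^{n+1}(x,t)/t$, which the paper obtains from~\eqref{box} together with the quasisymmetry of $F$. Also, your remark that the hypothesis $n\ge 2$ ``becomes essential'' in the reverse estimate is misplaced: Proposition~\ref{n1} runs the identical argument for $n=1$, the only modification being that $f'$ is interpreted as a doubling measure rather than an $L^1_{\loc}$ function.
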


Here $\mathbb H^{n+1} =\R^n\times(0,\infty) $ and the hyperbolic metric on $\mathbb H^{n+1}$ is $\abs{d x}/x^{n+1}$. 
Theorem~\ref{Thm} can be also formulated for $n=1$, in which case it becomes  a refinement of the  Beurling-Ahlfors extension theorem.  

\begin{proposition}\label{n1} 
If $f \colon \R \to \R$  is  increasing and quasisymmetric, then  $F \colon \R^{2} \to  \R^{2} $ is $\delta_1$-monotone where $\delta_1$ depends only on $\eta$ in Definition~\ref{qs}. Furthermore, $F \colon \mathbb H^{2} \to \mathbb H^{2}$ is bi-Lipschitz in the hyperbolic metric.
\end{proposition}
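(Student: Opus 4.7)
The plan is to follow the proof of Theorem~\ref{Thm} step by step but replace the hypothesis of $\delta$-monotonicity throughout by the quasisymmetric inequality~\eqref{Kav24}. This substitution is forced by the fact that in dimension one every increasing function is $1$-monotone, so delta-monotonicity carries no quantitative information; the $\eta$-function instead supplies the quantitative control that the cone condition provides in higher dimensions. All constants will accordingly depend on $\eta$ rather than on a $\delta$.

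First I would check that the integrals \eqref{Fi} and \eqref{Fn1} converge: an $\eta$-quasisymmetric increasing map $f\colon \R\to \R$ grows at most polynomially, with exponent controlled by $\eta$ via the standard doubling property, so both integrals converge absolutely against the Gaussian weight $\phi$. For the $\delta_1$-monotonicity of $F$ on $\R^2$, I would reproduce the key calculation from the proof of Theorem~\ref{Thm}, writing both $\inn{F(p_1)-F(p_2),\,p_1-p_2}$ and $\abs{F(p_1)-F(p_2)}$ as integrals of finite differences of $f$, and bounding the first by a constant multiple of the second from below. The step in the higher-dimensional argument that invokes the $\delta$-cone condition is here replaced by the one-dimensional comparison $f(z)-f(x)\asymp f(y)-f(x)$ whenever $z-x\asymp y-x$, which is an immediate consequence of \eqref{Kav24} with constants depending only on $\eta$. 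The reflection across $t=0$ preserves the estimate, as it does in the higher-dimensional case.

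For the hyperbolic bi-Lipschitz claim on $\mathbb{H}^2$, I would differentiate under the integral sign and, after integration by parts in $y$, convert $f$-averages weighted by derivatives of $\phi$ into $f'$-averages weighted by $\phi$ itself. The goal is to prove
\[ \norm{DF(x,t)} \le C\,F^2(x,t)/t \qquad \text{and} \qquad \det DF(x,t) \ge c\,\bigl(F^2(x,t)/t\bigr)^2 \]
with $C,c$ depending only on $\eta$; these two inequalities, together with the already-established comparability $F^2(x,t)\asymp t\bigl(f(x+t)-f(x-t)\bigr)$, express bi-Lipschitz behavior in the hyperbolic metric. The upper bound follows routinely from the integral formulas and the quasisymmetric growth control. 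The determinant lower bound is the main obstacle: in the higher-dimensional theorem it comes directly from the cone condition imposed by $\delta$-monotonicity, whereas here I would derive it by localizing on a ball where \eqref{Kav24} forces the increments of $f$ at scale $\sim t$ to be of a definite size, so that the two rows of $DF(x,t)$ are quantitatively non-parallel. Converting quasisymmetry into this non-degeneracy estimate for a pairing of two weighted averages of $f'$ is the most delicate point of the argument.
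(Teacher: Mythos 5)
The main gap in your proposal concerns the bi-Lipschitz claim. You correctly set up the two inequalities $\norm{DF}\le C\,F^2/t$ and $\det DF\ge c\,(F^2/t)^2$, and you flag the second (the determinant lower bound) as ``the most delicate point,'' proposing to derive it by a localization argument that you do not carry out. But no such argument is needed: once $F$ is shown to be $\delta_1$-monotone on $\mathbb H^2$ --- which is the first half of what you are proving --- it is automatically quasiconformal by Proposition~\ref{dmqs}, so the comparison $\det DF\gtrsim\norm{DF}^2$ is already built in. The paper's bi-Lipschitz argument uses only quasiconformality of $F$, the fact that $\mathbb H^{n+1}$ is geodesic, and the two-sided estimate $\norm{DF(x,t)}\approx F^{n+1}(x,t)/t$. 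By treating the Jacobian lower bound as an additional problem to be solved from scratch, you have created an obstacle that does not exist and then stopped short of it.

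There is also a conceptual misalignment in the first half. You say that the quasisymmetric inequality~\eqref{Kav24} should ``supply the quantitative control that the cone condition provides in higher dimensions,'' and you propose to replace the derivative-level cone estimate by a finite-difference comparison $f(z)-f(x)\asymp f(y)-f(x)$. In fact, in one dimension the cone condition of Lemma~\ref{deltas}\eqref{delta3} holds trivially with $\gamma=1$: for a scalar $a=f'(x)\ge 0$ one has $v\,a\,v=a\,v^2=\norm{a}\,\abs{v}^2$ for every $v\in\R$. Quasisymmetry is not a substitute for the cone condition; it is used for something else, namely to guarantee that the distributional derivative $\mu=f'$ is a doubling measure (this is the classical fact \cite[13.20]{Heb}, which replaces Lemma~\ref{Dfdoub}). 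Once $\mu$ is doubling, the differentiation-under-the-integral computation of $DF(x,t)$ from the proof of Theorem~\ref{Thm}, the matrix $B(y)$, and Lemma~\ref{doublem} all go through verbatim with $\mu=f'$ in place of $\norm{Df}\,dx$ --- $F$ itself is $C^\infty$ on $\mathbb H^2$, so there is no need to retreat to finite differences of $f$. Your proposal ends up recapitulating part of this structure, but it obscures the two clean observations ($\gamma=1$; $f'$ is doubling) that make the one-dimensional case a direct corollary of the higher-dimensional proof, and it leaves the bi-Lipschitz half incomplete.
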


Fefferman,  Kenig and  Pipher \cite[Lemma 4.4]{FKP} proved that $F$ in Proposition~\ref{n1} is quasiconformal. Proposition~\ref{n1} was originally proved in~\cite{Ko} using their result. In this paper we give a direct proof.

Theorem~\ref{Thm} has an application to mappings with a convex
potential~\cite{Ca92}, i.e., those of the form $f=\nabla u$ with $u$
convex. The basic properties and examples of quasiconformal mappings
with a convex potential are given in~\cite{KM}. 

\begin{corollary}\label{convex} Suppose that $f\colon \R^n\to\R^n$,
$n\ge 2$, is a $K$-quasiconformal
mapping with a convex potential. Then $f$ can be extended to a
$K_1$-quasicon\-formal mapping $F\colon \R^{n+1}\to\R^{n+1}$ with a
convex potential, where $K_1$ depends only on $K$ and $n$.
\end{corollary}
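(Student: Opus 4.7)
The plan is to pass through the class $\DM$: recognize $f$ as delta-monotone, apply Theorem~\ref{Thm} to obtain the explicit extension, and then verify that the extension is itself the gradient of a convex function while inheriting quasiconformality from delta-monotonicity.

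The entry point is the result of~\cite{KM}: a $K$-quasiconformal gradient mapping $f = \nabla u$ on $\R^n$, $n \ge 2$, is $\delta$-monotone for some $\delta = \delta(K, n) > 0$. Applying Theorem~\ref{Thm}, one obtains a $\delta_1$-monotone extension $F \colon \R^{n+1} \to \R^{n+1}$ with $\delta_1 = \delta_1(K, n)$. Since every nonconstant delta-monotone mapping is quasiconformal with distortion controlled by $\delta_1$ and $n+1$ (see~\cite[Chapter 3]{AIMb}), $F$ is automatically $K_1$-quasiconformal with $K_1 = K_1(K, n)$.

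To exhibit a convex potential for $F$, I would define, for $t \ge 0$,
\[
U(x, t) = \int_{\R^n} u(x + ty)\, \phi(y)\, dy.
\]
Differentiating under the integral sign, using $\nabla u = f$ and $\partial_t u(x+ty) = \inn{f(x+ty), y}$, recovers the formulas~\eqref{Fi} and~\eqref{Fn1} exactly; hence $\nabla_{(x,t)} U = F$ on the closed upper half-space. For each fixed $y$, the map $(x, t) \mapsto u(x+ty)$ is convex as the composition of the convex function $u$ with an affine map, so averaging against the positive density $\phi$ shows that $U$ is convex on $\R^n \times [0, \infty)$.

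The remaining task, which I expect to be the main obstacle, is to extend $U$ across $\{t = 0\}$ so as to reproduce the reflected $F$ while preserving convexity — generally, even reflection of a convex function on a half-space is not convex. Setting $\tilde U(x, t) = U(x, |t|)$ is consistent with the reflection rules (even in $F^i$, odd in $F^{n+1}$) and yields $\nabla \tilde U = F$ on all of $\R^{n+1}$. The only delicate case in the midpoint inequality for $\tilde U$ is $t_1 > 0 > t_2$, where $|(t_1+t_2)/2| \le (t_1 - t_2)/2$. I would close this case in two moves: convexity of $U$ along the chord from $(x_1, t_1)$ to $(x_2, -t_2)$ inside the upper half-space, combined with the fact that $U(x, \cdot)$ is non-decreasing on $[0, \infty)$ since $\partial_t U = F^{n+1} \ge 0$ (the inequality already recorded after~\eqref{Fn1}, which follows from the monotonicity of $f$). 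Concatenating these two estimates produces the required midpoint inequality for $\tilde U$, completing the construction of the convex potential.
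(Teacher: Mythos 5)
Your proposal is correct, and it diverges from the paper's argument precisely at the point of producing the convex potential. The paper's proof is more abstract: having obtained the $\delta_1$-monotone extension $F$ from Theorem~\ref{Thm} (via \cite[Lemma 18]{Ko} for the initial $\delta$-monotonicity, rather than \cite{KM} as you cite, though both are in the same circle of ideas), it observes that the block formula~\eqref{Bmatrix} and the identity~\eqref{afterBmatrix} show $DF$ is symmetric whenever $Df$ is, that $DF$ is positive semidefinite by Lemma~\ref{deltas}, and concludes $F=\nabla U$ with $U$ convex without ever writing $U$ down. You instead exhibit the potential explicitly as the Gaussian average $U(x,t)=\int_{\R^n}u(x+ty)\,\phi(y)\,dy$, check by differentiation under the integral that $\nabla U$ reproduces the formulas~\eqref{Fi}--\eqref{Fn1}, get convexity on the upper half-space by averaging the convex functions $(x,t)\mapsto u(x+ty)$, and then handle the reflection by setting $\tilde U(x,t)=U(x,\abs{t})$ and closing the one delicate midpoint case ($t_1>0>t_2$) by combining convexity of $U$ along the chord from $(x_1,t_1)$ to $(x_2,-t_2)$ with monotonicity of $U(x,\cdot)$ on $[0,\infty)$ coming from $\partial_t U=F^{n+1}\ge 0$. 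Both routes are sound. Your version buys an explicit formula for the potential and avoids the (standard but unstated) fact that a continuous $W^{1,1}_{\loc}$ vector field with a.e.\ symmetric Jacobian on $\R^{n+1}$ is a gradient; the paper's version is shorter and makes the structural point that the extension operator sends symmetric Jacobians to symmetric Jacobians. One small remark: your reflected potential $\tilde U$ is $C^1$ across $\{t=0\}$ because $F^{n+1}(x,0)=\int\inn{f(x),y}\phi(y)\,dy=0$, which is worth noting explicitly so that $\nabla\tilde U=F$ holds classically rather than only piecewise.
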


\section{Preliminaries}\label{secdef}
Let $e_1,\dots,e_{n+1}$ be the standard basis of $\R^{n+1}$. All vectors are treated as  column vectors. The transpose of a vector $v$ is denoted by $v^T$. We use the operator norm $\norm{\cdot}$ for matrices.
  A Borel measure $\mu$ on $\R^n$ is  \emph{doubling} if there exists $\mathscr D_\mu$, called the doubling constant of $\mu$, such that
\[\mu (2B) \le \mathscr D_\mu \, \mu (B)\] 
for all balls $B=B(x,r)$. Here $2B=B(x,2r)$.

The geometric definition of   class $\QC$ given in the introduction is equivalent to the following analytic definition \cite{Heb, Vab}.
\begin{definition}\label{defKqc}
A homeomorphism $f \colon \R^n \to \R^n$ ($n \ge 2$) is quasiconformal if $f \in W^{1,n}_{\loc} (\R^n , \R^n)$ and there exists a constant $K$ such  that the differential matrix $Df(x)$ satisfies the distortion inequality
\[\norm{Df(x)}^n \le K \det Df(x) \quad \mbox{ a.e. in } \R^n. \]
\end{definition}

Delta-monotone mappings also have  an   analytic definition. 

\begin{lemma}\label{deltas}
Let $\Omega$ be a convex domain in $\R^n$, $n \ge 2$. Suppose   $f\in W^{1,1}_{\loc} (\Omega , \R^n)$  is  continuous. The following are equivalent:
\begin{enumerate}[(i)]
\item\label{delta1} $f$ is $\delta$-monotone in $\Omega$ for some $\delta >0$; that is,~\eqref{demon} holds for all $x,y\in\Omega$;
\item\label{delta2} there exists $\delta>0$ such that for a.e. $x\in \Omega$ the matrix $Df(x)$ satisfies 
\begin{equation*}%
v^TDf(x)v \ge \delta \abs{Df(x)v} \abs{v}\qquad \text{for every vector $v\in \R^n$;}
\end{equation*}
\item\label{delta3} there exists $\gamma>0$ such that for a.e. $x\in\Omega$ the matrix $Df(x)$ satisfies 
\begin{equation*}%
v^TDf(x)v \ge \gamma \norm{Df(x)} \abs{v}^2\qquad \text{for every vector $v\in \R^n$.}
\end{equation*}
\end{enumerate}
The constants $\delta$ and $\gamma$ depend only on each other.
\end{lemma}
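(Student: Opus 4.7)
Three of the implications are fairly routine. First, (iii) $\Rightarrow$ (ii) is immediate from $|Df(x) v| \le \norm{Df(x)}\,|v|$ (with $\delta = \gamma$). For (ii) $\Rightarrow$ (i), I would use the convexity of $\Omega$ together with the fact that the $W^{1,1}_{\loc}$ map $f$ is absolutely continuous on a.e.\ line (extended to every line in $\Omega$ by continuity of $f$) to integrate $Df$ along the segment from $x$ to $y$, take the inner product with $y - x$, apply (ii) pointwise inside the integral, and invoke Minkowski's inequality $|\int \cdot| \le \int |\cdot|$. Conversely, (i) $\Rightarrow$ (ii) follows by substituting $y = x + tv$ into (i), dividing by $t^2$, and letting $t \to 0^+$ at a point of differentiability of $f$.

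The substantive implication is (ii) $\Rightarrow$ (iii). At almost every $x$, set $A = Df(x)$ and decompose $A = S + T$ with $S = (A + A^T)/2$ symmetric and $T = (A - A^T)/2$ antisymmetric. Since $v^T A v = v^T S v$, condition (ii) reads $v^T S v \ge \delta |Av|\,|v|$ for every $v \in \R^n$, which forces $S$ to be positive semidefinite; let $0 \le \lambda_1 \le \cdots \le \lambda_n$ denote its eigenvalues. Testing (ii) with a unit right singular vector $u$ of $A$ satisfying $|Au| = \norm{A}$ gives $\lambda_n \ge u^T S u \ge \delta \norm{A}$, and combined with the standard bound $\lambda_n \le \norm{A}$ this shows that $\lambda_n$ is comparable to $\norm{A}$ with constants depending only on $\delta$.

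The main obstacle is the quantitative lower bound $\lambda_1 \ge c(\delta)\,\lambda_n$, i.e., a uniform condition number bound on $A$. To establish it, I would test (ii) on the two-parameter family $v_\theta = \cos\theta\, e_1 + \sin\theta\, w$, where $e_1$ is a unit $\lambda_1$-eigenvector of $S$ and $w$ ranges over unit vectors orthogonal to $e_1$. Antisymmetry of $T$ gives $Te_1 \perp e_1$, so $Ae_1 = \lambda_1 e_1 + Te_1$ is an orthogonal split; at $\theta = 0$, (ii) yields $|Te_1| \le (\sqrt{1-\delta^2}/\delta)\,\lambda_1$, and tracking the first- and second-order behavior of (ii) in $\theta$ (for varying $w$, in particular $w=e_n$) extracts the desired inequality $\lambda_n \le C(\delta)\,\lambda_1$. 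Combining these two steps, $v^T S v \ge \lambda_1 |v|^2 \ge (\delta/C(\delta))\,\norm{A}\,|v|^2$, which is (iii) with $\gamma = \delta/C(\delta)$. The asserted mutual dependence of $\delta$ and $\gamma$ follows by tracking these constants (with the reverse direction supplied by (iii) $\Rightarrow$ (ii) at $\delta = \gamma$).
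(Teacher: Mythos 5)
Your routine implications are fine, and your reduction of (ii) $\Rightarrow$ (iii) to a uniform condition-number bound is a sensible reformulation. But the heart of the argument --- the inequality $\lambda_n \le C(\delta)\,\lambda_1$ for the symmetric part $S$ --- is never actually established. You verify it at $\theta=0$ (the bound $|Te_1|\le(\sqrt{1-\delta^2}/\delta)\lambda_1$) and then assert that ``tracking the first- and second-order behavior of (ii) in $\theta$'' produces the rest; that is not a proof. In the purely symmetric case one can indeed minimize the ratio over the one-parameter family $\cos\theta\,e_1+\sin\theta\,e_n$ and get $\lambda_1/\lambda_n\ge\delta^2/4$, but when $T\ne 0$ the quantity $|Av_\theta|^2$ picks up cross terms $\langle Ae_1, Ae_n\rangle$ involving $Te_1$ and $Te_n$, and the expansion in $\theta$ no longer closes from the single estimate $|Te_1|\le(\sqrt{1-\delta^2}/\delta)\lambda_1$. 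This is the genuine gap.

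The paper takes a cleaner route that avoids the symmetric/antisymmetric decomposition altogether. It proves the claim that any matrix $A$ satisfying (ii) also satisfies $|Av|\ge c(\delta)\,\|A\|\,|v|$; combining this with (ii) then gives (iii) with $\gamma=c(\delta)\delta$. The proof normalizes $\|A\|=|v|=1$, picks a unit vector $u$ with $|Au|=1$, flips the sign of $u$ so that $u^TAv+v^TAu\le 0$, sets $\lambda=\sqrt{|Av|}$, and tests (ii) on $\lambda u+v$: an upper bound $2\lambda^2$ for $(\lambda u+v)^TA(\lambda u+v)$ and a lower bound $\delta(\lambda-\lambda^2)(1-\lambda)$ yield $2\lambda\ge\delta(1-\lambda)^2$, hence a positive lower bound for $\lambda$ depending only on $\delta$. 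If you want to salvage your approach, you would need a comparably explicit two-vector argument to force the condition number bound on $S$; alternatively, simply prove the paper's claim directly, since $v^TSv\ge\delta|Av||v|\ge\delta c(\delta)\|A\|\,|v|^2$ already is (iii). (One more minor point: the equivalence (i) $\Leftrightarrow$ (ii) is cited in the paper from Kovalev's earlier work rather than re-derived; your sketches of it are roughly right but gloss over why $f$ is differentiable a.e., which comes from $\DM\subset\QC$.)
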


\begin{proof}
The equivalence of~\eqref{delta1} and~\eqref{delta2}, with the same constant $\delta$, was proved in~\cite[p. 397]{Ko}. It is obvious that~\eqref{delta3} implies~\eqref{delta2} with $\delta=\gamma$. It remains to establish the converse implication ~\eqref{delta2}$\implies$~\eqref{delta3}. To this end we need the following \\
{\bf Claim:} if a real square matrix $A$ satisfies
\begin{equation*}%
v^TAv \ge \delta \abs{Av} \abs{v}\qquad \text{for every $v\in \R^n$}
\end{equation*}
then 
\begin{equation}\label{something}
\abs{Av} \ge c\norm{A}\abs{v}\qquad c=c(\delta)>0.
\end{equation}
Although   this claim is known, even with a sharp constant~\cite{AIM}, we give a proof for the sake of completeness.  It suffices to estimate $\abs{Av}$ from below under the assumptions that $Av \ne 0$ and $\norm{A}=1=\abs{v}$.  Let $u$ be a unit vector in $\R^n$ such that $\abs{Au}=1$. Replacing $u$ by $-u$ if necessary we may assume that $u^TAv+v^TAu\le0$. Let $\lambda = \sqrt{\abs{Av}}$. On one hand we have
\begin{equation}\label{no1}
(\lambda u +v)^T A (\lambda u +v) \le \lambda^2 u^T Au + v^T Av \le \lambda^2 + \lambda^2=2\lambda^2.
\end{equation}
On the other hand
\begin{equation}\label{no2}
(\lambda u +v)^T A (\lambda u +v) \ge  \delta \abs{\lambda Au + Av} \abs{\lambda u +v} \ge \delta (\lambda - \lambda^2)(1-\lambda).
\end{equation}
Combining~\eqref{no1} and~\eqref{no2} we obtain
$2 \lambda \ge    \delta (1-\lambda)^2$, hence
\[\lambda \ge \delta^{-1}+1- \sqrt{(\delta^{-1}+1)^2-1}>0.\]
This proves the claim.
\end{proof}

\section{Delta-monotone mappings and doubling measures}

The following result shows that $\DM\subset \QC$. In particular, $f\in \DM$ implies that $f$ is a continuous 
Sobolev mapping, and therefore~\eqref{delta2}--\eqref{delta3} of Lemma~\ref{deltas} hold.
\begin{proposition}\label{dmqs} \cite[Theorem 6]{Ko}  Every nonconstant $\delta$-monotone mapping is   $\eta$-quasisymmetric where $\eta$ depends only on $\delta$.
\end{proposition}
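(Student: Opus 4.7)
The plan is to establish $\eta$-quasisymmetry by passing through the intermediate notion of weak quasisymmetry (a two-point condition), then invoking the standard result that on a connected doubling metric space such as $\R^n$, weak quasisymmetry upgrades to full $\eta$-quasisymmetry with quantitative control depending only on the weak constant and the data of the space.

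The geometric core is the observation that \eqref{demon} forces the displacement $f(x) - f(y)$ to lie in a closed cone of half-angle $\alpha = \arccos\delta < \pi/2$ around $x - y$. In particular $f$ is injective, and after normalizing so that $z = 0$ and $f(0) = 0$, every image point $f(a)$ lies in the cone $C_a$ of axis $a/\abs{a}$ and aperture $\alpha$. From this I would prove the two weak-quasisymmetry building blocks. First, a radial doubling estimate
\[
\sup_{\abs{a}=2r}\abs{f(a)} \le C(\delta,n) \inf_{\abs{a}=r}\abs{f(a)},
\]
obtained by applying \eqref{demon} to colinear pairs $(a,2a)$: the cones $C_a$ and $C_{2a}$ share a common axis, so the angular information upgrades directly to a magnitude comparison. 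Second, an angular comparison
\[
\sup_{\abs{a}=r}\abs{f(a)} \le C(\delta,n) \inf_{\abs{a}=r}\abs{f(a)},
\]
obtained by chaining through $N = N(\delta,n)$ intermediate points on the sphere $\abs{x}=r$ and applying \eqref{demon} together with the cone constraints at each adjacent pair. Combining the two building blocks with the triangle inequality gives weak quasisymmetry with constant $H = H(\delta,n)$.

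The main obstacle I expect is the chaining step: \eqref{demon} directly bounds only an inner product, not a magnitude, so producing a uniform multiplicative comparison $\abs{f(a_{k+1})} \le C(\delta)\abs{f(a_k)}$ for consecutive points on the sphere requires exploiting the cone constraints at both endpoints simultaneously and choosing the mesh fine enough that the iteration closes. As a parallel route which sidesteps this bootstrap, I would attempt the analytic path: use a Minty-type resolvent argument applied to the maximal monotone extension of $f$ to obtain continuity and local boundedness and hence $f \in W^{1,1}_{\loc}$; invoke Lemma \ref{deltas} together with the claim embedded in its proof to obtain the pointwise distortion bound $\norm{Df(x)}^n \le K(\delta,n) \det Df(x)$, placing $f$ in $\QC$; and then appeal to the classical theorem that every $K$-quasiconformal mapping of $\R^n$ is $\eta$-quasisymmetric with $\eta$ depending only on $K$ and $n$.
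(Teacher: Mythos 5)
The paper itself offers no proof of this proposition; it is an imported result, cited as Theorem~6 of~\cite{Ko}. That said, both of your proposed routes contain concrete gaps, and neither matches the argument actually used in~\cite{Ko}, which relies on oscillation (modulus of continuity) estimates over balls together with the co-area formula and the isoperimetric inequality rather than on either a cone-chaining scheme or a pointwise-derivative-then-quasiconformality scheme.

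The radial doubling estimate in your geometric route is false as proposed. Applying~\eqref{demon} to the colinear pair $(a,2a)$ only constrains the \emph{direction} of $f(2a)-f(a)$ (it lies in a cone about $a$); it gives no upper bound on $\abs{f(2a)}$ in terms of $\abs{f(a)}$. The one-dimensional case makes this vivid: every increasing $f\colon\R\to\R$ satisfies~\eqref{demon} with $\delta=1$, the cones degenerate to half-lines, yet $f(2r)/f(r)$ can be arbitrarily large. Indeed, the proposition itself must be read with $n\ge 2$ implicit (as throughout the paper); in $n=1$ delta-monotonicity is vacuous and the conclusion fails. Any correct geometric proof must exploit the sphere in an essential way to control the radial growth, for instance by comparing $f(a)$ and $f(2a)$ through intermediate points with a transversal component, which is precisely where the cone constraints at both endpoints begin to interact; your sketch omits this mechanism and instead relies on the colinear argument that cannot work.

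Your fallback analytic route also has a gap at the regularity step. A Minty resolvent argument does give that $\id + f$ is onto and that $f$ is locally bounded (hence, being monotone, a.e.\ differentiable in the Alexandrov sense), but continuity plus local boundedness do not imply $f\in W^{1,1}_{\loc}$, and a.e.\ differentiability is strictly weaker than Sobolev membership. The Sobolev regularity of $\delta$-monotone maps is a genuine theorem requiring its own argument, and it is exactly one of the technical inputs that~\cite{Ko} (and the later~\cite{AIM}) establish. Without it, you cannot invoke Lemma~\ref{deltas} or the analytic definition of quasiconformality, so the chain ``pointwise distortion bound $\Rightarrow$ $\QC$ $\Rightarrow$ quasisymmetric'' does not get off the ground. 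In short: the strategic outline of your second route is reasonable, but the step you treat as routine (regularity) is the actual crux.
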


It is well-known that quasisymmetric mappings are closely related to doubling measures~\cite{Heb}. The following lemma is another instance of this relation.

\begin{lemma}\label{Dfdoub}
For any nonconstant $\delta$-monotone mapping $f \colon \R^n \to \R^n$ ($n\ge 2$) the measure $\mu= \norm{Df(x)} \, dx$ is  doubling. The doubling constant $\mathscr D_\mu$ depends only on $\delta$ and $n$. 
\end{lemma}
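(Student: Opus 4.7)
The plan is to show that $\mu(B)$ is comparable, with constants depending only on $\delta$ and $n$, to $r^{n-1}\diam f(B)$, and then use the quasisymmetry of $f$ (Proposition~\ref{dmqs}) to compare $\diam f(2B)$ with $\diam f(B)$.

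For the upper bound I use Lemma~\ref{deltas}\eqref{delta3}: for any unit vector $v$, $\norm{Df(x)}\le\gamma^{-1}v^{T}Df(x)v$ pointwise. Applying this along the diameter $t\mapsto x_{0}+tv$ of $B=B(x_{0},r)$, the fundamental theorem of calculus yields
\[
\int_{-r}^{r}\norm{Df(x_{0}+tv)}\,dt\le\gamma^{-1}v^{T}\bigl(f(x_{0}+rv)-f(x_{0}-rv)\bigr)\le\gamma^{-1}\diam f(B).
\]
Switching to polar coordinates $x=x_{0}+\rho u$, estimating $\rho^{n-1}\le r^{n-1}$, and averaging the resulting line integrals over $u\in S^{n-1}$ (via the symmetry $u\mapsto-u$) gives
\[
\mu(B)\le C(n,\delta)\,r^{n-1}\,\diam f(B).
\]

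For the matching lower bound the difficulty is that the polar weight $\rho^{n-1}$ cannot be absorbed by a single line-integral estimate. I handle this by restricting integration to the annulus $B\setminus\tfrac12 B$, where $\rho^{n-1}\ge(r/2)^{n-1}$. Along the ray $\rho\mapsto x_{0}+\rho u$,
\[
\int_{r/2}^{r}\norm{Df(x_{0}+\rho u)}\,d\rho\ge\abs{f(x_{0}+ru)-f(x_{0}+\tfrac{r}{2}u)},
\]
and the three-point inequality~\eqref{Kav24} applied twice to the collinear triple $\{x_{0},\,x_{0}+\tfrac{r}{2}u,\,x_{0}+ru\}$ bounds the right-hand side below by $c(\delta)m_{r}$, where $m_{r}=\min_{\abs{y-x_{0}}=r}\abs{f(y)-f(x_{0})}$. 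Integrating over $u\in S^{n-1}$ yields
\[
\mu(B)\ge c(n,\delta)\,r^{n-1}m_{r}.
\]

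It remains to relate $\diam f(2B)$ to $m_{r}$. Two further applications of~\eqref{Kav24} with $z=x_{0}$ give $M_{r}\le\eta(1)m_{r}$ for $M_{r}=\max_{\abs{y-x_{0}}=r}\abs{f(y)-f(x_{0})}$, together with $\diam f(2B)\le 2\eta(2)M_{r}$, hence $\diam f(2B)\le C(\delta)m_{r}$. Combining the upper bound at scale $2r$ with the lower bound at scale $r$ finishes the proof:
\[
\mu(2B)\le C(n,\delta)(2r)^{n-1}\diam f(2B)\le C'(n,\delta)\,r^{n-1}m_{r}\le C''(n,\delta)\,\mu(B).
\]
The main obstacle is the lower bound: the naive line-integral estimate does not carry the polar weight $\rho^{n-1}$, and restriction to an annular shell of comparable thickness is the natural device that bridges this gap.
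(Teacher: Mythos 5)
Your proof is correct in substance, but it takes a genuinely different route from the paper. The paper's argument is a one-liner: it cites Lemma~3.2 of \cite{KMW} for the two-sided estimate
\[
C^{-1}\,\frac{\diam f(B)}{\diam B}\le\dashint_B\norm{Df}\le C\,\frac{\diam f(B)}{\diam B},
\]
and then deduces doubling from the quasisymmetric bound $\diam f(2B)\le C\,\diam f(B)$. You instead reprove the needed two-sided estimate from scratch. Your upper bound uses Lemma~\ref{deltas}\eqref{delta3} to replace $\norm{Df}$ by a nonnegative directional derivative before integrating; your lower bound uses the trivial inequality $\norm{Df(x)}\ge\abs{Df(x)u}$ on the annulus $B\setminus\tfrac12 B$, where the polar weight $\rho^{n-1}$ is comparable to $r^{n-1}$, and then quasisymmetry to descend to $m_r$ and climb back up to $\diam f(2B)$. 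This is the same chain of ideas underlying the KMW lemma, so what you gain is a self-contained proof at the cost of a longer argument; the annulus device in the lower bound is exactly the right move.

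Two small remarks. First, in the upper bound you integrate $\norm{Df}$ along a full diameter through the center $x_0$, which implicitly requires the fundamental theorem of calculus for the restriction of the Sobolev map $f$ to an individual radial ray; since $\rho^{n-1}$ vanishes at $\rho=0$, integrability of $\norm{Df}$ along rays near the center is not automatic from $f\in W^{1,n}_{\loc}$. This is easily patched: monotonicity makes $\rho\mapsto\inn{f(x_0+\rho u),u}$ nondecreasing, so one can integrate on $[\varepsilon,r]$, use $u^TDf\,u\ge 0$, and pass to the limit $\varepsilon\to 0$; alternatively one can slice $B$ by the pencil of chords parallel to $u$ rather than by rays through $x_0$, which avoids the issue entirely. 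Second, the lower bound $\abs{f(x_0+ru)-f(x_0+\tfrac{r}{2}u)}\ge c(\delta)\,m_r$ follows from a single application of~\eqref{Kav24} (with $x=x_0$, $y=x_0+\tfrac r2 u$, $z=x_0+ru$), not two, but that does not affect the result.
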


\begin{proof}
Recall that $f$ is quasisymmetric. Lemma 3.2 in \cite{KMW} implies the existence of a constant $C=C(\delta, n)$ such that
\begin{equation}\label{box}
 C^{-1} \frac{\diam f(B)}{\diam B} \le \frac{1}{|B|} \int_B \norm{Df} \, d x \le C \frac{\diam f(B)}{\diam B} 
\end{equation}
for all balls $B \subset \R^n$. Since $\diam f(2B) \le C \diam f(B)$ with $C=C(\eta)$, the lemma follows.
\end{proof}

Recall that $\phi\colon\R^n\to (0,\infty)$ is the Gaussian kernel~\eqref{gauss}. Let $\B=B(0,1)$ be the open unit ball in $\R^n$.

\begin{lemma}\label{doublem}
Let $\mu$ be a doubling measure in $\R^n$ and  $p \ge 0$. Let $\Omega$ be either $\R^n$ or  the half space $\{y \colon \inn{y, \xi} \ge 0\}$ for some  $\xi \in \R^n$.  Then
\begin{equation}\label{doubineq}
C^{-1} \mu \big( \B \big) \le \int_{\Omega} \abs{y}^p \phi(y)\, d \mu (y) \le C   \mu \big( \B\big) 
\end{equation}
where the  constant $C$   depends only on $\mathscr D_\mu$, $p$ and $n$.
\end{lemma}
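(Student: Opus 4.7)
The plan is to decompose the integration domain dyadically for the upper bound and to exhibit a concrete test ball inside $\Omega$ for the lower bound. Both halves rely only on iterating the doubling property of $\mu$.

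For the upper bound I would write $\R^n$ as $\B \cup \bigcup_{k\ge 0} A_k$, where $A_k = 2^{k+1}\B \setminus 2^k\B$. On $\B$ we have $|y|^p\phi(y) \le \phi(0)$, while on $A_k$ the crude estimate $|y|^p\phi(y) \le (2\pi)^{-n/2}\, 2^{(k+1)p}\, e^{-2^{2k-1}}$ holds. Iterating the doubling condition gives $\mu(2^{k+1}\B) \le \mathscr D_\mu^{\,k+1}\mu(\B)$, so
\[
\int_\Omega |y|^p\phi(y)\,d\mu(y) \le \phi(0)\,\mu(\B) + \sum_{k=0}^{\infty}(2\pi)^{-n/2}\, 2^{(k+1)p}\, e^{-2^{2k-1}}\, \mathscr D_\mu^{\,k+1}\, \mu(\B).
\]
The Gaussian decay dominates the geometric growth $2^{(k+1)p}\mathscr D_\mu^{\,k+1}$, so the series converges to a constant depending only on $p$, $n$, and $\mathscr D_\mu$, as required.

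For the lower bound the idea is to locate a single ball $B_0\subset\Omega$ on which both $|y|^p$ and $\phi(y)$ are bounded below by absolute constants, and then invoke doubling to conclude $\mu(B_0)\gtrsim\mu(\B)$. Concretely, set $v = \xi/|\xi|$ in the half-space case (or choose any unit vector when $\Omega=\R^n$) and take $B_0 = B(\tfrac{3}{2}v,\tfrac{1}{4})$. A direct check of the triangle inequality shows $B_0\subset\Omega\cap\{\,1\le|y|\le 2\,\}$, and since $\B\subset 2^{4}B_0 = B(\tfrac{3}{2}v,4)$, four applications of doubling yield $\mu(\B) \le \mathscr D_\mu^{\,4}\mu(B_0)$. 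On $B_0$ we have $|y|^p\ge 1$ and $\phi(y)\ge\phi(2)$, whence
\[
\int_\Omega |y|^p\phi(y)\,d\mu(y) \ge \phi(2)\,\mu(B_0) \ge \phi(2)\,\mathscr D_\mu^{\,-4}\,\mu(\B).
\]

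The argument is essentially bookkeeping; I do not anticipate a genuine obstacle. The one delicate point is the half-space case: the naive choice $B_0=\B$ need not satisfy $\mu(\B\cap\Omega)\gtrsim\mu(\B)$, since a doubling measure is allowed to concentrate near the bounding hyperplane $\{\langle y,\xi\rangle=0\}$. Translating the test ball away from the origin to the fixed point $\tfrac{3}{2}v$ places it at positive distance from this hyperplane while keeping it inside the annulus $1\le|y|\le 2$, and the bounded number of doublings needed to recapture $\mu(\B)$ absorbs the loss into the constant $C$.
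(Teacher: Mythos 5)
Your proposal is correct and follows essentially the same route as the paper: a dyadic annular decomposition for the upper bound (identical to the paper's, including the bound $2^{(k+1)p}(2\pi)^{-n/2}e^{-2^{2k-1}}\mathscr D_\mu^{k+1}\mu(\B)$ on the $k$-th annulus), and for the lower bound a test set in the annulus $\{1\le|y|\le 2\}$ recaptured by iterated doubling. The paper uses the annulus $\{1/2\le|y|\le 1\}$ and states $\mu(\Omega\cap\{1/2\le|y|\le 1\})\ge\mathscr D_\mu^{-3}\mu(\B)$ without exhibiting the test ball, whereas you make the ball $B(\tfrac32 v,\tfrac14)$ explicit and flag the half-space subtlety; this is just a more detailed presentation of the same idea, costing one extra doubling in the constant.
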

\begin{proof}
We begin by estimating the integral in~\eqref{doubineq} from above as follows
\[\int_{\R^n} \abs{y}^p \phi(y)\, d \mu (y)  = \int_{\B}  \abs{y}^p  \phi(y)\, d \mu (y)  + \sum_{k=0}^\infty    \int_{2^{k} < |y| \le 2^{k+1}} \abs{y}^p  \phi(y)\, d \mu (y),
\]
where 
\[\int_{\B}  \abs{y}^p \phi(y)\, d \mu (y) \le \phi(0) \mu(\B) = (2 \pi)^{-\frac{n}{2}} \mu(\B) \]
and 
\[
\begin{split}
 \int_{2^{k} < |y| \le 2^{k+1}}  \abs{y}^p \phi(y)\, d \mu (y) & \le  2^{p(k+1)}(2 \pi)^{-\frac{n}{2}} e^{-2^{2k-1}}  \mu(B(0,2^{k+1})) \\
 & \le 2^{p(k+1)} (2 \pi)^{-\frac{n}{2}} e^{-2^{2k-1}} \mathscr D_\mu^{k+1} \mu(\B). 
  \end{split}
 \]
Summing over $k=0,1, 2 \dots$ we obtain
\[
\int_{\R^n}  \phi(y)\, d \mu (y) \le    {C} \,  \mu(\B)
\]
where $C=C(\mathscr D_\mu, p, n)>0$. 

We turn to the left side of~\eqref{doubineq}. The inequality
\[
\abs{y}^p \phi(y) \ge  \frac{e^{-1/2}}{2^p (2 \pi)^{n/2}} \qquad \mbox{ for } \quad \frac{1}{2}\le \abs{y} \le 1
 \]
implies
\[
\int_{\Omega} \abs{y}^p \phi(y)\, d \mu (y) \ge \frac{e^{-1/2}}{2^p (2 \pi)^{n/2}} \mu (\Omega \cap \{  1/2\le \abs{y} \le 1 \}). 
\]
Since  $ \mu (\Omega \cap \{  1/2\le \abs{y} \le 1 \})  \ge \mathscr D_\mu^{-3} \mu(\B)$, the left side of~\eqref{doubineq} follows.
\end{proof}

\section{Proof of main results}\label{pfth}
\begin{proof}[Proof of Theorem~\ref{Thm}]
Since $f$ is quasisymmetric by Proposition~\ref{dmqs}, it satisfies the growth condition $\abs{f(x)} \le \alpha \abs{x}^p + \beta$ for some constants $\alpha, \beta,  p $, see \cite[Theorem 11.3]{Heb}. 
Therefore, the integrals~\eqref{Fi} and~\eqref{Fn1} converge and $F$ is $C^\infty$-smooth in  $\mathbb H^{n+1}$. Let $\gamma=\gamma(\delta)>0$ be as in part~\eqref{delta3} of Lemma~\ref{deltas}. 

Our first step is to prove that for $(x,t)\in\mathbb H^{n+1}$ the matrix $\mathscr B:=DF(x,t)$ satisfies the condition
\begin{equation}\label{final}
w^T\mathscr B w \ge \gamma_1 \norm{\mathscr B } \abs{w}^2\qquad \text{for every vector $w\in \R^{n+1}$}
\end{equation}
where $\gamma_1=\gamma_1(\delta,n)>0$. Fix $x\in \R^n$ and $t>0$.  We compute the  partial derivatives of $F$ at $(x,t)\in \mathbb H^{n+1}$ as follows.
\[
\begin{split}
\frac{\partial F^i}{\partial x_j}  &= \int_{\R^n} f_j^i (x+ty) \phi(y) d y ,\quad 1 \le i,j \le n; \\
\frac{\partial F^i}{\partial t} &= \int_{\R^n}  \sum_{j=1}^n  f_j^i (x+ty) y^i \phi(y) d y ,\quad 1 \le i \le n; \\
\frac{\partial F^{n+1}}{\partial x_j}   &=  \int_{\R^n} \sum_{i=1}^n   f_j^i (x+ty) y^j \phi(y) d y ,\quad 1 \le j \le n; \\
\frac{\partial F^{n+1}}{\partial t}  &= \int_{\R^n}  \sum_{i=1}^n \sum_{j=1}^n  f_j^i (x+ty) y^iy^j \phi(y) d y .
 \end{split}
\]
To simplify formulas we write $A(y)=Df(x+ty)$ and let $B(y)$ be the $(n+1)\times (n+1)$ matrix
written in block form below.
\begin{equation}\label{Bmatrix} B(y)=\left(
   \begin{BMAT}(@,50pt,20pt){c.c}{c.c}
      A(y) & A(y)y \\ y^T A(y)& y^T A(y) y
   \end{BMAT}
   \right) .\end{equation}
With this notation we have
\begin{equation}\label{afterBmatrix}
DF(x,t) = \int_{\R^n} B(y) \phi(y) \, dy .
\end{equation}
First we show that   the norm of $\mathscr B$ is dominated by the quantity 
\[
\alpha := \int_{B(0,1)} \norm{A(y)}   \, dy .  
\]
Indeed, 
\[
\norm{\mathscr B} \le  \int_{\R^n} \norm{B(y)} \phi(y) \, dy \le    \int_{\R^n} \norm{A(y)} (1+\abs{y})^2\phi(y) \, dy.  
\]
By Lemma~\ref{Dfdoub} the measure $\mu = \norm{A(y)}\,  dy$ is doubling. Applying Lemma~\ref{doublem}  we obtain 
\begin{equation}\label{DF1}
\norm{\mathscr B} \le  C \alpha, \qquad C=C(\delta, n) .
\end{equation}
Next we estimate the quadratic form  $w\mapsto w^T\mathscr B w$ generated by $\mathscr B$ from below. For this we fix a vector $w \in \R^{n+1}$, written as $w= v+se_{n+1}$ with $v \in \R^n$ and $s\in \R$.  It is easy to see that
\[w^T B(y) w  = (v+sy)^T A(y) (v+sy).\]
Let $\Omega = \{y \in \R^n \colon \inn{v,sy} \ge 0\}$.
Then
\[ 
\begin{split}
w^T\mathscr B w& = \int_{\R^n} \left\{  (v+sy)^T A(y) (v+sy)  \right\} \phi(y) \, dy \\
& \ge \gamma \int_{\R^n} \norm{A(y)} \abs{v+sy}^2 \phi(y) \, dy \\
& \ge \gamma \int_{\Omega} \norm{A(y)} \abs{v+sy}^2 \phi(y) \, dy \\
& \ge \gamma \abs{v}^2 \int_{\Omega} \norm{A(y)}   \phi(y) \, dy +   \gamma s^2 \int_{\Omega} \norm{A(y)} \abs{y}^2  \phi(y) \, dy. 
\end{split}
\] 
Applying Lemma~\ref{doublem} with $\mu = \norm{A(y)}\,  dy$  we obtain
\begin{equation}\label{DF2}
w^T \mathscr B w \ge c\, \alpha  \gamma  (\abs{v}^2 + s^2)  = c \,  \alpha \gamma \abs{w}^2,  \qquad c=c(\delta, n) .
\end{equation}
Combining~\eqref{DF1} and~\eqref{DF2} we obtain~\eqref{final} with $\gamma_1=(c/C) \gamma $.     By virtue of Lemma~\ref{deltas} $F$ is $\delta_1$-monotone in the upper half-space $\mathbb H^{n+1}$ where $\delta_1=\delta_1(\delta,n)$. By symmetry,  $F$ is also $\delta_1$-monotone in the lower half-space.

To prove that $F$ is $\delta_1$-monotone in the entire space $\R^{n+1}$, we consider two points $a,b\in\R^{n+1}$ such that the line segment $[a,b]$ crosses the hyperplane $\R^n$
at some point $c$. We have
\[
\begin{split}
\inn{F(a)-F(b), a-b}& = \inn{f(a)-f(c), a-b} + \inn{F(c)-F(b), a-b}\\
& \ge \delta_1 \abs{F(a)-F(c)} \abs{a-b} + \delta_1 \abs{F(c)-F(b)} \abs{a-b}\\
& \ge \delta_1 \abs{F(a)-F(b)} \abs{a-b}
\end{split}
\]
Therefore, $F\in \DM$. 

It remains to show that  $F \colon \mathbb H^{n+1} \to \mathbb H^{n+1}$ is bi-Lipschitz in the hyperbolic metric.  
Since $F\in \QC$ and $\mathbb H^{n+1}$ is a geodesic space, it suffices to prove that
\begin{equation}\label{name}
 \norm{DF(x,t)} \approx \frac{F^{n+1} (x,t)}{t}.
\end{equation}
Here  $X \approx Y$ means that $X$ and $Y$ are comparable, i.e., $C^{-1} Y \le X \le CY$ where $C=C(\delta, n)$.
It follows from~\eqref{DF1} and~\eqref{DF2} that $\norm{DF(x,t)}$ is comparable to the integral average of $\norm{Df}$ over the ball $B(x,t)$. By~\eqref{box} this average is comparable to $t^{-1}\diam f(B(x,t))$.  The  quasisymmetry of $F$ implies (cf. \cite[11.18]{Heb})
\[\diam f(B(x,t)) \approx \abs{F(x,t)- F(x, t/2)} \approx F^{n+1}(x,t) .\]
This proves~\eqref{name}.
\end{proof}

\begin{proof}[Proof of Proposition~\ref{n1}]
The proof of Theorem~\ref{Thm} also works in the case $n=1$ with the following interpretation. 
Since quasisymmetric mappings on the line need not be  absolutely continuous~\cite{BA}, the derivative $f'$ must be understood in the sense of distributions. In fact, $\mu:=f'$ is a positive doubling measure with
$\mathscr D_{\mu}=\mathscr D_{\mu}(\eta)$ \cite[13.20]{Heb}.
Lemma~\ref{Dfdoub} is not needed in this case.
The rest of the proof carries over with $\gamma=1$ and $\gamma_1=\gamma_1(\mathscr D_{\mu})$.
\end{proof}

\begin{proof}[Proof of Corollary~\ref{convex}]
According to~\cite[Lemma 18]{Ko}, a $K$-quasiconformal mapping with a
convex potential is also $\delta$-monotone with $\delta=\delta(K,n)$.
Let $F$ be the $\delta_1$-monotone extension of $f$ provided by
Theorem~\ref{Thm}. Since the differential matrix $Df$ is symmetric,
the formulas~\eqref{Bmatrix} and~\eqref{afterBmatrix} show that $DF$ is
symmetric as well. In addition, $DF$ is positive semidefinite by
Lemma~\ref{deltas}. Thus, $F=\nabla U$ for some convex function
$U\colon \R^{n+1}\to \R$.
\end{proof}

\section{Concluding remarks}

Both classes $\QC$ (quasiconformal) and $\BL$ (bi-Lipschitz) are groups under composition. However, the class of delta-monotone mappings $\DM$  is not closed under composition (consider the rotation of the complex plane given by $z \mapsto e^{i \theta} z$ where $\abs{\theta} < \pi /2$). Let $\QC_d \subset \QC$ be the group generated by $\BL$ and $\DM$. In other words, $f$ belongs to $\QC_d$ if it can be decomposed into bi-Lipschitz and delta-monotone mappings. This should be compared with the notion of polar factorization of mappings introduced by Brenier~\cite{Br}.

Theorem~\ref{Thm} together with the trivial extension of bi-Lipschitz mappings yield a solution to the extension problem for $\QC_d$. 

\begin{corollary}\label{cor} Let $n \ge 2$. For any mapping $f \colon \R^n \to \R^n$ of class $\QC_d $ there exists  $F \colon \R^{n+1} \to \R^{n+1}$ of class $\QC_d $ such that the restriction of $F$ to $\R^n$ agrees with $f$.
\end{corollary}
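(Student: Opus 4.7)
The idea is straightforward once one unwinds the definition of $\QC_d$ and observes that both extension procedures available to us (the trivial one for $\BL$ and the integral one of Theorem~\ref{Thm} for $\DM$) preserve the hyperplane $\R^n\times\{0\}$. So I plan to reduce Corollary~\ref{cor} to the extension of the individual factors in a group-theoretic decomposition of $f$.

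By the definition of $\QC_d$, any $f\in\QC_d$ admits a factorization $f=g_1\circ g_2\circ\cdots\circ g_k$ in which each factor $g_i$ belongs to $\BL\cup \DM\cup \DM^{-1}$ (since $\BL$ is itself a group, its inverses are already in $\BL$). The first step is to extend each $g_i$ separately to a self-map $G_i\colon\R^{n+1}\to\R^{n+1}$ lying in $\QC_d$, in such a way that $G_i$ preserves the hyperplane $\R^n\times\{0\}$ and restricts there to $g_i$.

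\textbf{Case 1:} If $g_i\in\BL$, use the trivial extension $G_i(x,t)=(g_i(x),t)$, which is bi-Lipschitz on $\R^{n+1}$, hence a member of $\BL\subset\QC_d$, and obviously fixes $\R^n\times\{0\}$ setwise with restriction $g_i$.

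\textbf{Case 2:} If $g_i\in\DM$, apply Theorem~\ref{Thm} to obtain the $\delta_1$-monotone extension $G_i$. By construction $G_i(x,0)=(g_i(x),0)$, so it maps $\R^n\times\{0\}$ into itself and restricts to $g_i$ there; moreover $G_i\in\DM\subset\QC_d$.

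\textbf{Case 3:} If $g_i=h^{-1}$ with $h\in\DM$, first apply Theorem~\ref{Thm} to $h$ to obtain the extension $H\in\DM$, and then set $G_i:=H^{-1}$. Since $\DM\subset\QC$, the mapping $H$ is a homeomorphism of $\R^{n+1}$ preserving the hyperplane $\R^n\times\{0\}$, so $H^{-1}$ preserves this hyperplane as well and restricts to $h^{-1}=g_i$. Since $H\in\DM$, we have $G_i=H^{-1}\in\DM^{-1}\subset\QC_d$ by the definition of $\QC_d$ as a group.

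Finally, define $F:=G_1\circ G_2\circ\cdots\circ G_k$. As a composition of elements of $\BL\cup\DM\cup\DM^{-1}$, the mapping $F$ lies in $\QC_d$. Each $G_i$ preserves the hyperplane $\R^n\times\{0\}$ and acts there as $g_i$, so $F$ preserves this hyperplane and restricts there to $g_1\circ\cdots\circ g_k=f$, completing the proof. I do not anticipate a genuine obstacle: the only points requiring any care are the two verifications that the Gaussian-integral extension of Theorem~\ref{Thm} sends $\R^n\times\{0\}$ to itself (immediate from $F^{n+1}(x,0)=\int\langle f(x),y\rangle\phi(y)\,dy=0$) and that the inverse of a $\DM$-extension therefore still restricts to the correct boundary map, both of which are essentially bookkeeping once the group-theoretic framework is in place.
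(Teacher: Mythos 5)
Your proposal is correct and follows essentially the same approach as the paper, which proves the corollary in a single sentence by combining Theorem~\ref{Thm} with the trivial bi-Lipschitz extension; you have simply spelled out the factor-by-factor extension and composition. One small simplification: Case 3 is redundant, since the defining inequality~\eqref{demon} is symmetric under $f\leftrightarrow f^{-1}$, so $\DM$ is already closed under inversion and every factor can be taken in $\BL\cup\DM$.
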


It seems likely that $\QC_d $ is a proper subset of $\QC$. This motivates the following question:

\begin{question}\label{which}
Which quasiconformal mappings are decomposable? 
\end{question}

Both bi-Lipschitz and delta-monotone mappings take smooth curves into rectifiable curves~\cite[Theorem 3.11.7]{AIMb}. This is no longer true for their composition. More precisely, for any $1<\alpha<2$ one can construct a mapping
$f\colon \R^2\to\R^2$ such that $f \in \QC_d$ and $f(\R)$ has Hausdorff dimension at least $\alpha$. To this end, one first finds a bi-Lipschitz mapping $g\colon \R^2\to\R^2$ such that $g(\R)$ contains a planar Cantor set $E$ of dimension $0<\beta<1$ (see Lemma~3.1\cite{Bi} and the comment after its proof). Second, there is a delta-monotone mapping $h\colon \R^2\to\R^2$ such that the Hausdorff dimension of $h(E)$ is equal to $\alpha$ (see the construction in~\cite[Theorem~5]{GV}). Finally, let $f=h\circ g$. 

\section*{Acknowledgments}
We thank Mario Bonk  and Jang-Mei Wu for conversations related to the subject of this paper.

\bibliographystyle{amsplain}

\end{document}